\newtheorem{theorem}{Theorem}
\newtheorem{corollary}[theorem]{Corollary}
\newtheorem{lemma}[theorem]{Lemma}
\DeclareMathOperator{\sympeak}{sp}
\DeclareMathOperator{\symvalley}{sv}
\DeclareMathOperator{\hsympeak}{hsp}
\DeclareMathOperator{\dsymvalley}{dsv}
\DeclareMathOperator{\HSP}{HSP}
\DeclareMathOperator{\DSV}{DSV}
\DeclareMathOperator{\GHSP}{GHSP}
\DeclareMathOperator{\GSP}{GSP}
\DeclareMathOperator{\GDSV}{GDSV}
\DeclareMathOperator{\GSV}{GSV}
\begin{document}
\title{The heights of symmetric peaks and the depth of symmetric valleys over compositions of an integer}

\author{Walaa Asakly and Noor Kezil\\
Department of Mathematics, Braude college, Karmiel, Israel\\
Department of Mathematics, University of Haifa , Haifa, Israel\\
{\tt walaaa@braude.ac.il}\\
{\tt nkizil02@campus.haifa.ac.il}}
\date{\small }
\maketitle

\begin{abstract}
A composition $\pi=\pi_1\pi_2\cdots\pi_k$ of a positive integer $n$ is an ordered collection of one or more positive integers whose sum is $n$ . The number of summands, namely $k$, is called the number of parts of $\pi$.
In this paper, we introduce two statistics over compositions of an integer $n$ with exactly $k$ parts: heights of symmetric peaks and depths of symmetric valleys over all compositions of $n$. We derive an explicit formula for the generating functions of compositions of $n$ with exactly $k$ parts according to the number of symmetric peaks (valleys) and the total heights (depths) of peaks (valleys).

Additionally, we present a combinatorial proof for  the total heights (depths) of peaks (valleys) over all compositions of $n$ with exactly $k$ parts.
Furthermore, we investigate these statistics in the context of random words, where the letters are generated by geometric probabilities.
\medskip

\noindent{\bf Keywords}: Symmetric peaks, Symmetric valleys, Height of peaks, Depth of valleys, Compositions and Generating functions.
\end{abstract}

\section*{Introduction}
This paper is divided into three sections. In the first and second sections, we focus on compositions of an integer $n$ as follows:
A {\em composition} $\pi$ of a positive integer $n$ with $k$ parts is a sequence $\pi_1\pi_2\cdots\pi_k$ of positive integers over the set
$[n]=\{1,2\ldots,n\}$ such that $\sum_{i=1}^{k}\pi_i=n$. The number of parts namely, $k$ is called the number of {\em parts} of $\pi$.
For instance, the compositions of $5$ are $5$, $41$, $14$, $32$, $23$, $113$, $131$, $311$, $221$, $212$, $122$, $1112$, $1211$, $1121$, $2111$ and $11111$.

Let $C_n$ $(C_{n,k}$) be the set of all compositions of $n$ (compositions of $n$ with exactly $k$ parts), with $|C_0|=|C_{0,0}|=1$, $|C_{0,k}|=0$ for $k\geq 1$, and $|C_{n,k}|=0$ for $n<0$. The number of compositions of $n$ with exactly $k$ parts is given by $\binom{n-1}{k-1}$, and the total number of compositions of $n$ is $2^{n-1}$, see \cite{Mac}.

If there exists $1\leq i\leq k-2$ such that $\pi_i<\pi_{i+1}$ and $\pi_i=\pi_{i+2}$ ( $\pi_i>\pi_{i+1}$ and $\pi_i=\pi_{i+2}$),
we say that the composition $\pi\in C_{n,k} $  contains a {\em symmetric peak (valley)} $\pi_i\pi_{i+1}\pi_{i+2}$.
Mansour, Moreno and Ram\'irez \cite{MMR} found the total number of symmetric peaks and symmetric valleys over all compositions of $n$.
We define the value $\pi_{i+1}-\pi_{i}$ ($\pi_{i}-\pi_{i+1}$) to be the {\em height of a symmetric peak (the depth of a symmetric valley)}.
The study of symmetric peaks and valleys interested researchers in various combinatorial structures. Asakly \cite{A} found the number of symmetric and non-symmetric peaks over words. More recently, Fl\'orez and  Ram\'irez \cite{FR} proposed the idea of symmetric and non symmetric peaks in Dyck paths. Following this, Elizalde \cite{E} generalized their results.

 In the first two sections we obtain the generating function for the number of compositions of an integer $n$ with exactly $k$ parts according to the statistics of the number of symmetric peaks and the sum of the heights of symmetric peaks. By using the theory of generating functions, we derive the formula of the sum of the heights of symmetric peaks.
A similar discussion leads us to derive the formula of the sum of the depths of symmetric valleys.  We provide combinatorial formulas to count the sum of heights and depths of symmetric peaks and symmetric valleys, respectively.

In the third section we study  geometrically distributed words. To implement this, we need the following definition: if $0< p \leq 1$, then a  discrete random variable $X$  is said to be {\em geometric}, if $P\{X=k\}=pq^{k-1}$ for all integers $k \geq 1$, where $p+q=1$. We say that a word
$\omega=\omega_1\omega_2\cdots \omega_m$ over the alphabet of positive integers is {\em geometrically distributed}, if the letters of $\omega$ are independent and identically distributed geometric random variables.
The study of geometrically distributed words has been a recent topic of interest in enumerative combinatorics; see for example, \cite{B, BK}, and the
references therein.
In the end of the section, we count the total number of symmetric peaks (valleys) and the sum of heights of symmetric peaks (depths of valleys) in a discrete geometrically distributed sample.

\vspace{0.5cm}

\section{Heights of symmetric peaks}
\subsection{The generating function for the number of compositions of $n$ according to sum of symmetric peak heights}
Let $\sympeak(\pi)$ and $\hsympeak(\pi)$ denote the number of symmetric peaks  and the sum of the heights of symmetric peaks in a composition $\pi$, respectively.
Denote  the generating function for the number of compositions of $n$ with exactly $k$ parts
according to the statistics  $\sympeak$ and $\hsympeak$ by $\HSP(x,y,q,h)$; that is,
$$\HSP(x,y,q,h)=\sum_{n,k\geq 0}\sum_{\pi \in C_{n,k}}x^n y^k q^{\sympeak(\pi)} h^{\hsympeak(\pi)}.$$

We derive an explicit formula for the generating function $\HSP(x,y,q,h)$. For that, we denote the generating function for the number of compositions $\pi=\pi_1\pi_2\cdots\pi_k\in C_{n,k}$ such that $\pi_j=a_j$ for all $j=1,2,\ldots,s$ and $s<k$ according to the statistics $\sympeak$ and $\hsympeak$ by
$$\HSP(x,y,q,h|a_1a_2\cdots a_s)=\sum_{n,k\geq 0}\sum_{\pi=a_1a_2\cdots a_s\pi_{s+1}\cdots\pi_k \in C_{n,k}}x^n y^k q^{\sympeak(\pi)} h^{\hsympeak(\pi)}.$$

\begin{theorem}\label{Th1}
 The generating function for the number of compositions of $n$ with exactly $k$ parts
according to the number of symmetric peaks and the sum of heights of symmetric peaks is given by
\begin{equation}\label{eq11}
\HSP(x,y,q,h)=\frac{1}{1-\sum_{a\geq1}\frac{x^ay}{1-y^2x^{2a+1}(\frac{qh}{1-hx}-\frac{1}{1-x})}}.
\end{equation}
\end{theorem}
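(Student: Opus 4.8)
The plan is to set up a transfer-type recursion on the prefix generating functions $\HSP(x,y,q,h\mid a_1\cdots a_s)$ and to exploit the fact that whether a symmetric peak is created at positions $(i,i+1,i+2)$ is decided, at the moment $\pi_{i+2}$ is appended, solely by the two preceding parts $\pi_i,\pi_{i+1}$. Consequently every peak lying strictly inside a fixed prefix $a_1\cdots a_s$ contributes a constant factor, and all the remaining (``free'') structure depends on the prefix only through the last two letters $a_{s-1},a_s$. I would therefore introduce a two--index series
\[ \Phi(a,b)=\sum_{m\ge0}\ \sum_{c_1,\dots,c_m\ge1} x^{c_1+\cdots+c_m}\,y^{m}\,q^{\,(\text{peaks})}\,h^{\,(\text{heights})}, \]
recording every continuation $c_1\cdots c_m$ of a word whose last two parts are $(a,b)$, where the peaks counted are those of the triples $(a,b,c_1),(b,c_1,c_2),\dots$. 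Writing $\Psi(a)=1+\sum_{b\ge1}x^{b}y\,\Phi(a,b)$ for a continuation from the single letter $a$, the whole series factors as $\HSP=1+\sum_{a\ge1}x^{a}y\,\Psi(a)$, so it suffices to determine $\Phi$.

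Splitting off the first continuation letter $c_1=c$ gives the functional equation $\Phi(a,b)=1+\sum_{c\ge1}w(a,b,c)\,x^{c}y\,\Phi(b,c)$, where the weight $w(a,b,c)$ equals $qh^{\,b-a}$ exactly when $(a,b,c)$ is a symmetric peak, i.e. when $a<b$ and $c=a$, and equals $1$ otherwise. Isolating this single exceptional term and abbreviating $D(b)=\sum_{c\ge1}x^{c}y\,\Phi(b,c)$ turns the equation into
\[ \Phi(a,b)=1+D(b)+[a<b]\,(qh^{\,b-a}-1)\,x^{a}y\,\Phi(b,a). \]
Here lies the only real obstacle: the right--hand side refers to $\Phi(b,a)$, so a priori the recursion never closes. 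The point that breaks the impasse is that the correction term is present only in the ascending order $a<b$; in the opposite order the indicator $[b<a]$ vanishes, so $\Phi(b,a)=1+D(a)$ is already explicit. Substituting this back expresses $\Phi(a,b)$ for $a<b$ in closed form, and the dangerous two--cycle collapses.

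With $\Phi$ known in terms of the one--index series $D$, I would close the system by resubstituting into $D(b)=\sum_{c\ge1}x^{c}y\,\Phi(b,c)$, separating the ranges $c\le b$ and $c>b$. Summing the geometric contribution of the correction terms over $c>b$ is where the quantity in the statement materialises, since
\[ \sum_{c>b}x^{c}y\,(qh^{\,c-b}-1)\,x^{b}y=x^{2b+1}y^{2}\Big(\tfrac{qh}{1-hx}-\tfrac{1}{1-x}\Big). \]
Denoting this by $K_b$ and setting $E=\sum_{c\ge1}x^{c}y(1+D(c))$, the equation reduces to $D(b)=E+K_b(1+D(b))$, whence $1+D(b)=(1+E)/(1-K_b)$. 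Feeding this into the definition of $E$ yields $E=(1+E)\sum_{a\ge1}x^{a}y/(1-K_a)$, and since $\HSP=1+\sum_{a\ge1}x^{a}y(1+D(a))=1+E$, solving the last scalar relation gives precisely $\HSP=\big(1-\sum_{a\ge1}x^{a}y/(1-K_a)\big)^{-1}$, which is \eqref{eq11}. I expect the bookkeeping of boundary peaks (the very first triple, and peaks straddling the prefix/tail cut) to be the most error--prone part to write carefully, but conceptually everything after the collapse of the two--cycle is routine.
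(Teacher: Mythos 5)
Your argument is correct and is essentially the paper's own proof in transfer-recursion clothing: your $x^{a}y\,\Psi(a)$ and $x^{a+b}y^{2}\Phi(a,b)$ coincide with the paper's prefix-conditioned series $\HSP(x,y,q,h\mid a)$ and $\HSP(x,y,q,h\mid ab)$, and both derivations turn on the same observations (the correction $(qh^{\,b-a}-1)$ appears only for $a<b$ with third letter $a$, the geometric sum over $c>b$ produces $x^{2b+1}y^{2}\bigl(\frac{qh}{1-hx}-\frac{1}{1-x}\bigr)$, and the system closes into the same scalar equation). No substantive gap; only the bookkeeping you already flag needs to be written out.
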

\begin{proof}
By definition
\begin{equation}\label{eq1}
\HSP(x,y,q,h)=1+\sum_{a\geq1}\HSP(x,y,q,h|a).
\end{equation}
Moreover
\begin{align}\
\HSP(x,y,q,h|a)
&= x^ay+\sum_{j=1}^{a}\HSP(x,y,q,h|aj)+\sum_{j\geq a+1}\HSP(x,y,q,h|aj) \nonumber \\
&=x^ay+x^ay\sum_{j=1}^{a}\HSP(x,y,q,h|j)+\sum_{j\geq a+1}\HSP(x,y,q,h|aj).\label{eq2}
\end{align}
For all $b\geq a+1$
\begin{align*}
\HSP(x,y,q,h|ab)&=x^{a+b}y^2+\sum_{j=1, j\neq a}^{b-1}\HSP(x,y,q,h|abj)+\HSP(x,y,q,h|aba)\\
&+\HSP(x,y,q,h|abb)+\sum_{j\geq b+1}\HSP(x,y,q,h|abj)\\
&=x^{a+b}y^2+x^{a+b}y^2\sum_{j=1, j\neq a}^{b-1}\HSP(x,y,q,h|j)+qx^{a+b}y^2h^{b-a}\HSP(x,y,q,h|a)\\
&+x^{a+b}y^2\HSP(x,y,q,h|b)+x^ay\sum_{j\geq b+1}\HSP(x,y,q,h|bj)\\
&=x^{a+b}y^2+x^{a+b}y^2\sum_{j=1}^{b-1}\HSP(x,y,q,h|j)+(h^{b-a}q-1)x^{a+b}y^2\HSP(x,y,q,h|a)\\
&+x^{a+b}y^2\HSP(x,y,q,h|b)+x^ay\sum_{j\geq b+1}\HSP(x,y,q,h|bj).\\
\end{align*}
By (\ref{eq2}) we get
\begin{align*}
\HSP(x,y,q,h|ab)&=x^{a+b}y^2+x^{a+b}y^2\sum_{j=1}^{b-1}\HSP(x,y,q,h|j)+(qh^{b-a}-1)x^{a+b}y^2\HSP(x,y,q,h|a)\\
&+x^{a+b}y^2\HSP(x,y,q,h|b)+x^ay(\HSP(x,y,q,h|b)
-x^by-x^by\sum_{j=1}^{b}\HSP(x,y,q,h|j))\\
&=(qh^{b-a}-1)x^{a+b}y^2\HSP(x,y,q,h|a)+x^ay\HSP(x,y,q,h|b).
\end{align*}
Summing over all $b\geq a+1$ we have
\begin{align*}
\sum_{j\geq a+1}\HSP(x,y,q,h|aj)&=\sum_{j\geq a+1}(qh^{j-a}-1)x^{a+j}y^2\HSP(x,y,q,h|a)+x^ay\sum_{j\geq a+1}\HSP(x,y,q,h|j)\\
&=x^{2a+1}y^2\HSP(x,y,q,h|a)\left(\frac{qh}{1-hx}-\frac{1}{1-x}\right)+x^ay\sum_{j\geq a+1}\HSP(x,y,q,h|j).
\end{align*}
Thus, by (\ref{eq2}) we have
\begin{align*}
\HSP(x,y,q,h|a)&=x^ay+\sum_{j=1}^{a}\HSP(x,y,q,h|aj)+x^{2a+1}y^2\HSP(x,y,q,h|a)\left(\frac{qh}{1-hx}-\frac{1}{1-x}\right)\\
&+x^ay\sum_{j\geq a+1}\HSP(x,y,q,h|j)\\
&=x^ay+x^ay\sum_{j=1}^{a}\HSP(x,y,q,h|j)+x^{2a+1}y^2\HSP(x,y,q,h|a)\left(\frac{qh}{1-hx}-\frac{1}{1-x}\right)\\
&+x^ay\sum_{j\geq a+1}\HSP(x,y,q,h|j)\\
&=x^ay+x^ay\sum_{j\geq 1}\HSP(x,y,q,h|j)+x^{2a+1}y^2\HSP(x,y,q,h|a)\left(\frac{qh}{1-hx}-\frac{1}{1-x}\right)\\
&=x^ay+x^ay(\HSP(x,y,q,h)-1)+x^{2a+1}y^2\HSP(x,y,q,h|a)\left(\frac{qh}{1-hx}-\frac{1}{1-x}\right).
\end{align*}
Therefore,
\begin{align*}
\HSP(x,y,q,h|a)=\frac{x^ay\HSP(x,y,q,h)}{1-x^{2a+1}y^2\left(\frac{qh}{1-hx}-\frac{1}{1-x}\right)}.
\end{align*}
Hence, by (\ref{eq1}) we have
\begin{align*}
\HSP(x,y,q,h)-1=\HSP(x,y,q,h)\sum_{a\geq 1}\frac{x^ay}{1-x^{2a+1}y^2\left(\frac{qh}{1-hx}-\frac{1}{1-x}\right)}.
\end{align*}
This leads to the required result.
\end{proof}

Note that by substituting $h=1$ in (\ref{eq11}) we obtain the generating function for the number of compositions of $n$ with $k$ parts according to the number of  symmetric peaks
\begin{equation}\label{eq3}
\HSP(x,y,q,1)=\frac{1}{1-\sum_{a\geq1}\frac{x^ay}{1-y^2x^{2a+1}(\frac{q-1}{1-x})}}.
\end{equation}
 which is in accord with the result in \cite{MMR}.

Let $\hsympeak(n)$ denote the sum of heights of symmetric peaks in $C_n$, and let $\hsympeak(n,k)$ denote the the sum of heights of symmetric peaks in $C_{n,k}$. Then $\hsympeak(n)=\sum_{k=0}^{n}\hsympeak(n,k)$.

According to Theorem \ref{Th1}, we can conclude this lemma.
\begin{lemma}\label{lem1}
The generation functions for the sequences $\hsympeak(n)$ and $\hsympeak(n,k)$ are given by
 \begin{equation}\label{eq4}
 \sum_{n\geq 0}\hsympeak(n)x^n=\frac{\partial }{\partial q}\HSP(x,1,1,h)\mid_{h=1}=\frac{x^4}{(1-2x)^2(1-x^3)}.
 \end{equation}
\begin{equation}\label{eq5}
 \sum_{n,k\geq 0}\hsympeak(n,k)x^ny^k=\frac{\partial }{\partial q}\HSP(x,y,1,h)\mid_{h=1}=\frac{y^3x^4}{(1-x^3)(1-x-yx)^2}.
 \end{equation}
\end{lemma}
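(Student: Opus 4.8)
The plan is to read off both generating functions as two specializations of a single partial derivative of the function $\HSP(x,y,q,h)$ supplied by Theorem~\ref{Th1}. Since
\[\HSP(x,y,q,h)=\sum_{n,k\ge 0}\sum_{\pi\in C_{n,k}}x^n y^k q^{\sympeak(\pi)}h^{\hsympeak(\pi)},\]
the variable $h$ records the cumulative peak height $\hsympeak(\pi)$; hence applying $\partial/\partial h$ brings down a factor $\hsympeak(\pi)$ and, after setting $q=h=1$, produces $\sum_{n,k\ge 0}\hsympeak(n,k)x^ny^k$. Specializing further to $y=1$ collapses the part count and gives $\sum_{n\ge0}\hsympeak(n)x^n$. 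So the two identities \eqref{eq4} and \eqref{eq5} are the $y=1$ and general-$y$ evaluations of one object, $\partial_h\HSP\mid_{q=h=1}$, and it suffices to differentiate the closed form \eqref{eq11}.

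To organize the differentiation I would abbreviate $g=g(x,q,h)=\frac{qh}{1-hx}-\frac{1}{1-x}$ and $S=S(x,y,q,h)=\sum_{a\ge1}\frac{x^a y}{1-y^2x^{2a+1}g}$, so that $\HSP=(1-S)^{-1}$. Differentiating formally term by term (legitimate, since every coefficient in $x$ and $y$ receives only finitely many contributions), the chain rule gives $\partial_h\HSP=(1-S)^{-2}\,\partial_hS$ with
\[\partial_hS=\sum_{a\ge1}\frac{x^a y\,\bigl(y^2x^{2a+1}\,\partial_hg\bigr)}{\bigl(1-y^2x^{2a+1}g\bigr)^2}.\]

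The decisive simplification is that $g$ vanishes at $q=h=1$, since $\frac{1\cdot1}{1-x}-\frac{1}{1-x}=0$. Consequently every denominator $1-y^2x^{2a+1}g$ collapses to $1$ at the evaluation point, and this is precisely what turns the sum into a tractable geometric series. At $q=h=1$ I would record the three needed values $S=\frac{xy}{1-x}$, hence $1-S=\frac{1-x-xy}{1-x}$, and $\partial_hg=\frac{q}{(1-hx)^2}\big|_{q=h=1}=\frac{1}{(1-x)^2}$. Substituting these and summing $\sum_{a\ge1}x^{3a+1}=\frac{x^4}{1-x^3}$ yields $\partial_hS\mid_{q=h=1}=\frac{y^3x^4}{(1-x)^2(1-x^3)}$, and therefore
\[\partial_h\HSP\big|_{q=h=1}=\frac{(1-x)^2}{(1-x-xy)^2}\cdot\frac{y^3x^4}{(1-x)^2(1-x^3)}=\frac{y^3x^4}{(1-x^3)(1-x-yx)^2},\]
which is \eqref{eq5}; putting $y=1$ gives $\frac{x^4}{(1-2x)^2(1-x^3)}$, which is \eqref{eq4}.

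The computation is mostly mechanical, so the only genuine points requiring care are bookkeeping ones. First, one must differentiate with respect to the variable that actually tracks the height statistic, namely $h$; the variable $q$ alone would return the \emph{number} of symmetric peaks rather than their total height and would introduce a spurious extra factor of $(1-x)$. Second, one must exploit the vanishing of $g$ at $q=h=1$ before evaluating the double sum, since otherwise the expression for $\partial_hS$ looks far more complicated than it is. As a sanity check I would verify the first coefficients directly: the smallest symmetric peak is $121$, giving $\hsympeak(4)=1$, and an enumeration of $C_5$ (with contributions from $131$, $1211$, and $1121$) gives $\hsympeak(5)=4$, both of which agree with the series expansion of \eqref{eq4}.
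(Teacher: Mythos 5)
Your proposal is correct and follows the same route the paper intends: the paper gives no explicit computation (it simply asserts that the lemma follows from Theorem~\ref{Th1}), and your hand differentiation of the closed form \eqref{eq11}, exploiting that $g=\frac{qh}{1-hx}-\frac{1}{1-x}$ vanishes at $q=h=1$, supplies exactly the missing details and reproduces both stated formulas. You are also right that the operator in the lemma statement should be $\partial/\partial h$ rather than $\partial/\partial q$ (as written, $\frac{\partial}{\partial q}\HSP(x,1,1,h)$ has no $q$ left to differentiate), and your check of the initial coefficients $\hsympeak(4)=1$, $\hsympeak(5)=4$ confirms the corrected reading.
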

Using a computer algebra system such as Maple, we determined the coefficients of $x^n$ in (\ref{eq4}) and obtained the following result.
\begin{corollary}\label{cor1}
The sum of the heights of symmetric peaks over all compositions of $n$ is given by
\begin{equation}\label{eq51}
 \hsympeak(n)=\left(\frac{7n-24}{49}\right)2^{n-1}+\frac{(-33-15i\sqrt{3})(-2)^n}{441(1+i\sqrt{3})^{n+1}}+\frac{(-33+15i\sqrt{3})(-2)^n}{441(1-i\sqrt{3})^{n+1}}+\frac{1}{3}.
\end{equation}
\end{corollary}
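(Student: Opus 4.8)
The plan is to read the closed form off directly from the rational generating function in \eqref{eq4} of Lemma~\ref{lem1}, since after that lemma all that remains is extracting the coefficient of $x^n$ from
\[
G(x)=\frac{x^4}{(1-2x)^2(1-x^3)}.
\]
First I would factor the denominator completely over $\mathbb{C}$. Writing $\omega=e^{2\pi i/3}=\frac{-1+i\sqrt{3}}{2}$ for a primitive cube root of unity, we have $1-x^3=(1-x)(1-\omega x)(1-\omega^2 x)$, so $G$ has a double pole at $x=\tfrac12$ and simple poles at $x=1$, $x=\omega^{-1}$ and $x=\omega^{-2}$. Since $\deg(x^4)=4<5=\deg$ of the denominator, $G$ is proper and admits a partial fraction decomposition
\[
G(x)=\frac{A}{1-2x}+\frac{B}{(1-2x)^2}+\frac{C}{1-x}+\frac{D}{1-\omega x}+\frac{E}{1-\omega^2 x}.
\]

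Next I would compute the five constants by the usual cover-up and residue evaluations. Multiplying by $(1-2x)^2$ and setting $x=\tfrac12$ gives $B=\frac{x^4}{1-x^3}\big|_{x=1/2}=\frac{1}{14}$, while the companion constant at the double pole comes from the first-order Taylor term, $A=-\tfrac12\frac{d}{dx}\!\left(\frac{x^4}{1-x^3}\right)\big|_{x=1/2}=-\frac{31}{98}$. Multiplying by $(1-x)$ and setting $x=1$ yields $C=\frac{x^4}{(1-2x)^2(1+x+x^2)}\big|_{x=1}=\frac13$, and the cover-up at $x=\omega^{-1}=\omega^2$ gives
\[
D=\left.\frac{x^4}{(1-2x)^2(1-x)(1-\omega^2 x)}\right|_{x=\omega^2},
\]
with $E=\overline{D}$ forced by conjugate symmetry.

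Then I would extract coefficients termwise using $[x^n]\frac{1}{1-cx}=c^n$ and $[x^n]\frac{1}{(1-2x)^2}=(n+1)2^n$. The two terms over powers of $(1-2x)$ combine into $\bigl(A+(n+1)B\bigr)2^n=\frac{7n-24}{98}\,2^n=\frac{7n-24}{49}\,2^{n-1}$, which is the first summand of \eqref{eq51}; the term $C/(1-x)$ contributes the constant $\frac13$; and $D\omega^n+E\omega^{2n}$ produces the two complex-conjugate summands.

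The main obstacle is the cosmetic but fiddly task of matching the complex pair to the precise shape in \eqref{eq51}. The key identity is $\frac{-2}{1+i\sqrt{3}}=\frac{-1+i\sqrt{3}}{2}=\omega$, so that $\omega^n=\frac{(-2)^n}{(1+i\sqrt{3})^n}$. After rationalizing the residue one checks that $D=\frac{-33-15i\sqrt{3}}{441(1+i\sqrt{3})}$ (equivalently $D=\frac{-13+3i\sqrt{3}}{294}$), whence $D\omega^n=\frac{(-33-15i\sqrt{3})(-2)^n}{441(1+i\sqrt{3})^{n+1}}$ and its conjugate $E\omega^{2n}$ is the remaining middle term of \eqref{eq51}. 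Everything else is the routine arithmetic of the residue evaluations, so the statement follows by summing the five coefficient contributions.
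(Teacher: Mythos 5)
Your proposal is correct and follows essentially the same route as the paper: the paper obtains \eqref{eq51} by extracting $[x^n]$ from the rational generating function \eqref{eq4} of Lemma~\ref{lem1} (delegating the partial-fraction computation to Maple), whereas you carry out that same coefficient extraction by hand. Your constants check out ($B=\tfrac{1}{14}$, $A=-\tfrac{31}{98}$, $C=\tfrac13$, $D=\tfrac{-13+3i\sqrt{3}}{294}=\tfrac{-33-15i\sqrt{3}}{441(1+i\sqrt{3})}$), and the resulting expression agrees with \eqref{eq51}, e.g.\ giving $\hsympeak(5)=4$.
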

For instance, when $n=5$, we have three symmetric peaks over the compositions: $131$, $1211$, and $1121$. The sum of the heights of symmetric peaks is equal to $4$ as obtained from equation (\ref{eq51}).
\subsection{Combinatorial derivation of the number of compositions of $n$ with exactly $k$ parts according to sum of symmetric peak heights }
\begin{theorem}
\begin{enumerate}
\item For $n\geq 0$ and $k>3$, we have
$$\hsympeak(n,k)=(k-2)\sum_{m=2}^{n-k+1}\sum_{b=1}^{min\{m-1,t\}}\binom{n-2b-m-1}{k-4}(m-b) $$
where $t=\lfloor \frac{n-m-k+3}{2} \rfloor$.
\item When $k=3$ we have
$$\hsympeak(n,3)=\sum_{m=2}^{ n-2}h_m$$
where $h_m$ is defined as follows:
\begin{equation}
  h_m=\begin{cases}
    \frac{3m-n}{2}, & \text{if $\frac{n-m}{2}$ is an  integer}.\\
    0, & \text{otherwise}.
  \end{cases}
\end{equation}
\end{enumerate}
\end{theorem}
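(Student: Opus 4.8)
The plan is to evaluate $\hsympeak(n,k)$ by a direct double count, adding up the height of every symmetric peak over all compositions in $C_{n,k}$. By definition a symmetric peak occupying positions $i,i+1,i+2$ of $\pi$ is a factor of the form $b\,m\,b$ with $b=\pi_i=\pi_{i+2}$, $m=\pi_{i+1}$ and $m>b$, and its height equals $m-b$. Hence
\[
\hsympeak(n,k)=\sum_{i=1}^{k-2}\ \sum_{b\ge 1}\ \sum_{m>b}(m-b)\,N_i(n,k,b,m),
\]
where $N_i(n,k,b,m)$ counts the $\pi\in C_{n,k}$ with $\pi_i=\pi_{i+2}=b$ and $\pi_{i+1}=m$. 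The first observation is that once these three parts are prescribed, the remaining $k-3$ parts may be any positive integers summing to $n-2b-m$ placed in the remaining ordered slots, so $N_i$ is independent of the location $i$; this independence is exactly what produces the global factor $(k-2)$.

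The second step is to identify $N_i$. For $k>3$ the remaining $k-3$ parts form an arbitrary composition of $n-2b-m$ into $k-3$ positive parts, of which there are $\binom{n-2b-m-1}{k-4}$. Substituting and pulling out $(k-2)$ yields the stated double sum, after which it remains only to read off the summation ranges. The binomial coefficient is nonzero precisely when $n-2b-m-1\ge k-4$, i.e. $b\le\lfloor(n-m-k+3)/2\rfloor=t$, while $m>b\ge 1$ forces $1\le b\le m-1$; together these give $1\le b\le\min\{m-1,t\}$. The outer bounds $2\le m\le n-k+1$ follow from $m\ge 2$ (since $m>b\ge 1$) and from requiring a nonzero term at the smallest admissible base $b=1$. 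This establishes part (1).

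For part (2) the case $k=3$ must be handled separately, because the formula above degenerates: there are no free parts ($k-3=0$), and $\binom{n-2b-m-1}{-1}=0$ even though the true number of completions is $1$ exactly when $n-2b-m=0$. Thus a composition in $C_{n,3}$ carrying a peak is forced to be $b\,m\,b$ with $2b+m=n$, giving $b=(n-m)/2$. Indexing the sum by the top value $m$, a term survives iff $(n-m)/2$ is a positive integer, in which case the height is $m-b=(3m-n)/2$; summing over $2\le m\le n-2$ yields $\hsympeak(n,3)=\sum_{m=2}^{n-2}h_m$ with $h_m$ as stated.

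The main obstacle is not the enumeration, which is transparent, but the careful bookkeeping of the summation ranges together with the recognition that $k=3$ is genuinely exceptional: the replacement of ``compositions of $0$ into $0$ parts'' by a binomial coefficient fails, and the exactness condition $2b+m=n$ silently imposes the parity requirement $n\equiv m\pmod 2$ encoded in the definition of $h_m$.
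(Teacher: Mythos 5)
Your proof is correct and follows essentially the same route as the paper's: classify each symmetric peak occurrence by its position and its values $b,m$, count the completions of the remaining $k-3$ parts by the binomial coefficient $\binom{n-2b-m-1}{k-4}$, weight by the height $m-b$, and treat $k=3$ separately via the forced relation $2b+m=n$. The only difference is that you re-derive the underlying count of symmetric peaks from scratch, whereas the paper imports it directly from the cited result of Mansour, Moreno and Ram\'irez, making your version slightly more self-contained but not a different argument.
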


\begin{proof}
\begin{enumerate}
\item Let $\pi_{i-1}\pi_{i}\pi_{i+1}=bmb$ be a symmetric peak in a composion of $n$ with $k\geq 4$ parts, where $ 2\leq i\leq k-1$. According to \cite{MMR}, the total number of symmetric peaks in $C_n$ with $k$ parts is given by
$$(k-2)\sum_{m=2}^{n-k+1}\sum_{b=1}^{\min\{m-1,t\}}\binom{n-2b-m-1}{k-4}.$$
Multiplying the above expression by $m-b$ for each pair of $m$ and $b$ such that  $2\leq m\leq n-k+1$ and $1\leq b\leq \min\{m-1,t\}$  we get the desired result.
\item Let $h_m=m-b$. If $\frac{n-m}{2}$ is  an integer, then $b=\frac{n-m}{2}$. In this case, $h_m=m-b=m-\frac{n-m}{2}$. Otherwise, there is no symmetric peak $bmb$ such that $\frac{n-m}{2}$ is an integer, and therefore, $h_m=0$.
\end{enumerate}
\end{proof}

\section{Depths of symmetric valleys}
\subsection{The generating function for the number of compositions of $n$ according to the sum of symmetric valleys depth}
Let $\symvalley(\pi)$ and $\dsymvalley(\pi)$ denote the number of symmetric valleys and  the sum of the depths of symmetric valleys in a composition in $\pi$, respectively. Denote the generating function for the number of compositions of $n$ with exactly $k$ parts
according to the statistics  $\symvalley$ and $\dsymvalley$ by $\DSV(x,y,p,d)$; that is
$$\DSV(x,y,p,d)=\sum_{n,k\geq 0}\sum_{\pi \in C_{n,k}}x^n y^k p^{\symvalley(\pi)} d^{\dsymvalley(\pi)}.$$

 We derive an explicit formula for the generating function $\DSV(x,y,p,d)$. To do this, we denote the generating function for the number of compositions $\pi=\pi_1\pi_2\dots\pi_k\in C_{n,k}$ such that $\pi_j=a_j$ for all $j=1,2,\dots,s$ and $s<k$ according to the statistics sv and $\dsymvalley$ by
$$\DSV(x,y,p,d|a_1a_2\cdots a_s)=\sum_{n,k\geq 0}\sum_{\pi=a_1a_2\cdots a_s\pi_{s+1}\cdots\pi_k \in C_{n,k}}x^n y^k p^{\symvalley(\pi)} d^{\dsymvalley(\pi)}.$$

\begin{theorem}\label{Th2}
The generating function for the number of compositions of $n$ with exactly $k$ parts
according to the number of symmetric valleys and the sum of depths of symmetric valleys is given by
\begin{equation}\label{eq15}
\DSV(x,y,p,d)=\frac{1}{1-\sum_{a\geq1}\frac{x^ay}{1-y^2x^{a+1}\left(\frac{px^{a-1}-d^{a-1}}{\frac{x}{d}-1}-\frac{x^{a-1}-1}{x-1}\right)}}.
\end{equation}
\end{theorem}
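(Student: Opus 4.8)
The plan is to follow the same functional-equation scheme used for $\HSP$ in Theorem \ref{Th1}, with the sole structural change that a symmetric valley $\pi_i\pi_{i+1}\pi_{i+2}$ is a pattern $aba$ with $a>b$ (a strict descent followed by a return to the same level), whereas a symmetric peak was a strict ascent followed by a return. Consequently, wherever the peak argument treated the ``ascending'' second letter $j>a$ as the active direction, here the ``descending'' second letter $j<a$ plays that role. Writing $\DSV(x,y,p,d\mid a)$ for the refined generating function as in the statement, I would start from the decomposition $\DSV(x,y,p,d)=1+\sum_{a\ge 1}\DSV(x,y,p,d\mid a)$ and then derive a self-referential equation for each $\DSV(x,y,p,d\mid a)$.

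First I would establish the valley analogue of the first-level recursion: conditioning on the second letter $j$,
\[
\DSV(x,y,p,d\mid a)=x^ay+x^ay\sum_{j\ge a}\DSV(x,y,p,d\mid j)+\sum_{j=1}^{a-1}\DSV(x,y,p,d\mid aj).
\]
The point is that when $j\ge a$ the leading letter $a$ can be neither a shoulder nor the bottom of any symmetric valley (a shoulder would require the next letter to be strictly smaller), so $a$ may be dropped and the tail recurses as $x^ay\,\DSV(x,y,p,d\mid j)$; only the descents $j\le a-1$ can open a valley and must be retained as length-two prefixes.

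The heart of the argument is the second-level recursion for $\DSV(x,y,p,d\mid ab)$ with $b<a$, obtained by conditioning on the third letter $j$. There are four cases: $j=a$ completes a symmetric valley $aba$ of depth $a-b$, contributing the weight $p\,d^{\,a-b}$ and continuing as $\DSV(x,y,p,d\mid a)$; $j=b$ is a plateau with no valley; $j<b$ continues the descent, in which case $b$ survives as a possible shoulder of a later valley, so one drops only $a$ and recurses on $\DSV(x,y,p,d\mid bj)$ (a \emph{finite} sum $1\le j\le b-1$); and $j>b,\ j\ne a$ ascends, killing $b$ as a candidate, so one drops both $a,b$ and recurses on $\DSV(x,y,p,d\mid j)$ (an \emph{infinite} sum). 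I would then collapse the infinite drop-two sum using $\sum_{j\ge1}\DSV(x,y,p,d\mid j)=\DSV(x,y,p,d)-1$ and the finite drop-one sum using the first-level recursion applied to $\DSV(x,y,p,d\mid b)$. After the bare $x^{a+b}y^2$ terms cancel and the redundant $\DSV(x,y,p,d\mid b)$ and $\DSV(x,y,p,d\mid a)$ contributions combine, this should reduce to the clean identity $\DSV(x,y,p,d\mid ab)=(p\,d^{\,a-b}-1)x^{a+b}y^2\DSV(x,y,p,d\mid a)+x^ay\,\DSV(x,y,p,d\mid b)$, mirroring the peak recursion.

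Finally I would sum this identity over $b$ from $1$ to $a-1$. The main obstacle---and the genuine point of departure from the peak case---appears here: because the descent is bounded below by $1$, the sum $\sum_{b=1}^{a-1}(p\,d^{\,a-b}-1)x^{a+b}$ is a \emph{finite} geometric series, and evaluating it is precisely what produces the closed form $x^{a+1}\left(\frac{p x^{a-1}-d^{a-1}}{x/d-1}-\frac{x^{a-1}-1}{x-1}\right)$ in the denominator of (\ref{eq15}) (in the peak proof the corresponding sum over $b\ge a+1$ was infinite, giving the simpler $\frac{qh}{1-hx}-\frac{1}{1-x}$). Substituting back into the first-level recursion and merging $\sum_{j\ge a}$ with $\sum_{j=1}^{a-1}$ into $\sum_{j\ge1}\DSV(x,y,p,d\mid j)=\DSV(x,y,p,d)-1$ gives a linear equation for $\DSV(x,y,p,d\mid a)$ whose solution, summed over $a\ge1$ and solved for $\DSV(x,y,p,d)$, yields (\ref{eq15}). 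The delicate part throughout is the shoulder bookkeeping in the third-letter split---deciding which of the last one or two letters can still anchor a future symmetric valley---since a misstep there changes which terms cancel and corrupts the final geometric sum.
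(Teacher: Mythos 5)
Your proposal is correct and follows essentially the same route as the paper: the same first-level conditioning on the second letter, the same split on the third letter collapsing to $\DSV(x,y,p,d\mid ab)=(pd^{a-b}-1)x^{a+b}y^2\DSV(x,y,p,d\mid a)+x^ay\,\DSV(x,y,p,d\mid b)$ for $b<a$, and the same finite geometric summation over $1\le b\le a-1$. One small caveat: that sum actually evaluates to $y^2x^{a+1}\bigl(\frac{p(x^{a-1}-d^{a-1})}{x/d-1}-\frac{x^{a-1}-1}{x-1}\bigr)$, so the numerator $px^{a-1}-d^{a-1}$ you quote from \eqref{eq15} carries a misplaced $p$ (the specialization \eqref{eq61} at $d=1$ confirms the corrected form); aside from reproducing that typographical slip, your plan matches the paper's proof step for step.
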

\begin{proof}
By  definition
\begin{equation}\label{eq6}
\DSV(x,y,p,d)=1+\sum_{a\geq1}\DSV(x,y,p,d|a).
\end{equation}
Moreover
\begin{align}
\DSV(x,y,p,d|a)&= x^ay+\sum_{j=1}^{a-1}\DSV(x,y,p,d|aj)+\sum_{j\geq a}\DSV(x,y,p,d|aj) \nonumber \\
&=x^ay+\sum_{j=1}^{a-1}\DSV(x,y,p,d|aj)+x^ay\sum_{j\geq a}\DSV(x,y,p,d|j).\label{eq7}
\end{align}
For all $b\geq a$
\begin{align*}
\DSV(x,y,p,d|ab)=&x^{a+b}y^2+\sum_{j=1}^{b-1}\DSV(x,y,p,d|abj)+\DSV(x,y,p,d|aba)\\
&+\sum_{j\geq b, j\neq a}\DSV(x,y,q,h|abj)\\
&=x^{a+b}y^2+x^{a}y\sum_{j=1}^{b-1}\DSV(x,y,p,d|bj)+x^{a+b}y^2\sum_{j\geq b, j\neq a}\DSV(x,y,p,d|j)\\
&+x^{a+b}y^2pd^{a-b}\DSV(x,y,p,d|a)\\
&=x^{a+b}y^2+x^ay\sum_{j=1}^{b-1}\DSV(x,y,p,d|bj)+x^{a+b}y^2\DSV(x,y,p,d|a)(pd^{a-b}-1)\\
&+x^{a+b}y^2\sum_{j\geq b}\DSV(x,y,p,d|j).
\end{align*}
By (\ref{eq7}) we get
\begin{align*}
\DSV(x,y,p,d|ab)=&x^{a+b}y^2+x^ay(\DSV(x,y,p,d|b)-x^by-x^by\sum_{j \geq b}\DSV(x,y,p,d|j))\\
&+x^{a+b}y^2\DSV(x,y,p,d|a)(pd^{a-b}-1)+x^{a+b}y^2\sum_{j\geq b}\DSV(x,y,p,d|j)\\
&= x^{a+b}y^2\DSV(x,y,p,d|a)(pd^{a-b}-1)+x^ay\DSV(x,y,p,d|b).
\end{align*}
Thus, by (\ref{eq7})  and by summing over all $b\geq a$ we get
\begin{align*}
\DSV(x,y,p,d|a)=&x^ay+\sum_{j=1}^{a-1}x^{a+j}y^2\DSV(x,y,p,d|a)(pd^{a-j}-1)\\
&+x^ay\sum_{j=1}^{a-1}\DSV(x,y,p,d|j)+x^ay\sum_{j \geq a}\DSV(x,y,p,d|j)\\
&=x^ay\DSV(x,y,p,d)+x^ay^2pd^a\DSV(x,y,p,d|a)\left(\frac{\left(\frac{x}{d}\right)^a-\frac{x}{d}}{\frac{x}{d}-1}\right)\\
&-x^ay^2\DSV(x,y,p,d|a)\left(\frac{x^a-x}{x-1}\right).
\end{align*}
Therefore,
\begin{align*}
\DSV(x,y,p,d|a)=&\frac{x^ay}{1-y^2x^{a+1}\left(\frac{px^{a-1}-d^{a-1}}{\frac{x}{d}-1}-\frac{x^{a-1}-1}{x-1}\right)}\DSV(x,y,p,d)
\end{align*}
Hence, by (\ref{eq6}) we get
\begin{align*}
\DSV(x,y,p,d)=\frac{1}{1-\sum_{a\geq 1}\frac{x^ay}{1-y^2x^{a+1}\left(\frac{px^{a-1}-d^{a-1}}{\frac{x}{d}-1}-\frac{x^{a-1}-1}{x-1}\right)}}.
\end{align*}
\end{proof}

Note that by substituting $d=1$ in (\ref{eq15}) we obtain the generating function for the number of compositions of $n$ with $k$ parts according to the number of  symmetric valleys
\begin{equation}\label{eq61}
\DSV(x,y,p,1)=\frac{1}{1-\sum_{a\geq 1}\frac{x^ay}{1-\left(\frac{y^2x^{a+1}(x^{a-1}-1)(p-1)}{x-1}\right)}},
\end{equation}
which is in accord with the result in \cite{MMR} .

Let $\dsymvalley(n)$ denote the sum of depths of symmetric valleys in $C_n$, and let $\dsymvalley(n,k)$ denote the sum of depths of symmetric valleys in $C_{n,k}$. Then $\dsymvalley(n)=\sum_{k=0}^{n}dsv(n,k)$.

Accordung to Theorem \ref{Th2}, we can conclude this lemma.
\begin{lemma}\label{lem2}
The generating functions for the sequences $\dsymvalley(n)$ and $\dsymvalley(n,k)$ are given by
 \begin{equation}\label{eq8}
\sum_{n\geq 0}\dsymvalley(n)x^n= \frac{\partial }{\partial q}\DSV(x,1,1,d)\mid_{d=1}=\frac{x^7+x^5-2x^6}{(1-2x)^2(1-x^3)(1-x^2)^2}.
 \end{equation}
\begin{equation}\label{eq9}
 \sum_{n,k\geq 0}\dsymvalley(n,k)x^ny^k=\frac{\partial }{\partial q}\DSV(x,y,1,d)\mid_{d=1}=\frac{2x^7y^3-x^6y^3-x^4y^3}{(1-x^3)(1-x^2)(1-x-yx)^2}.
 \end{equation}
\end{lemma}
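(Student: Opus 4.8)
The plan is to read off both generating functions directly from the closed form for $\DSV(x,y,p,d)$ proved in Theorem~\ref{Th2}, exactly as Lemma~\ref{lem1} is obtained from Theorem~\ref{Th1}. Since $\dsymvalley(n,k)=\sum_{\pi\in C_{n,k}}\dsymvalley(\pi)$, the bivariate sum $\sum_{n,k}\dsymvalley(n,k)x^ny^k$ is recovered by differentiating with respect to the variable $d$ that records the total depth and then setting $d=1$, together with $p=1$ because the count ignores the number of valleys. Concretely I would write $\DSV(x,y,p,d)=\frac{1}{1-T}$ with $T=\sum_{a\geq1}\frac{x^ay}{1-y^2x^{a+1}G_a(p,d)}$ and $G_a(p,d)=\frac{px^{a-1}-d^{a-1}}{\frac{x}{d}-1}-\frac{x^{a-1}-1}{x-1}$, and then use the chain rule $\frac{\partial}{\partial d}\DSV(x,y,p,d)\big|_{p=d=1}=\frac{1}{(1-T)^2}\frac{\partial T}{\partial d}\big|_{p=d=1}$.

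The decisive simplification is that $G_a(1,1)=0$ for every $a$: at $p=d=1$ both fractions in $G_a$ collapse to $\frac{x^{a-1}-1}{x-1}$. Consequently $T\big|_{p=d=1}=\sum_{a\geq1}x^ay=\frac{xy}{1-x}$, so $1-T=\frac{1-x-xy}{1-x}$ and the prefactor reduces to $\frac{1}{(1-T)^2}=\frac{(1-x)^2}{(1-x-xy)^2}$, which is precisely the square of the all-compositions series $\DSV(x,y,1,1)=\frac{1-x}{1-x-xy}$. Moreover, because $G_a$ vanishes at $p=d=1$, each summand of $\frac{\partial T}{\partial d}$ loses its denominator, leaving $\frac{\partial T}{\partial d}\big|_{p=d=1}=\sum_{a\geq1}y^3x^{2a+1}\,\partial_d G_a\big|_{p=d=1}$, where a short quotient-rule computation gives $\partial_d G_a\big|_{p=d=1}=\frac{-(a-1)(x-1)+x^a-x}{(x-1)^2}$. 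This reduces the whole problem to one arithmetic-geometric series in $a$.

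I would then evaluate $\sum_{a\geq1}x^{2a+1}\big[-(a-1)(x-1)+x^a-x\big]$ by splitting it into $\sum_{a\geq1}x^{3a+1}$, $\sum_{a\geq1}x^{2a+2}$, and $-(x-1)\sum_{a\geq1}(a-1)x^{2a+1}$, using $\sum_{a\geq1}z^a=\frac{z}{1-z}$ and $\sum_{a\geq1}(a-1)z^a=\frac{z^2}{(1-z)^2}$, and combine the three resulting rational functions over a common denominator. Multiplying by $\frac{y^3}{(x-1)^2}$ and by the prefactor $\frac{(1-x)^2}{(1-x-xy)^2}$ and cancelling the common $(1-x)^2$ then yields the stated rational function for $\sum_{n,k}\dsymvalley(n,k)x^ny^k$; specialising to $y=1$, which is legitimate because $\dsymvalley(n)=\sum_k\dsymvalley(n,k)$, gives \eqref{eq8}.

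The main obstacle is purely this final consolidation: the series naturally produces the factors $1-x^2$, $1-x^3$ and $1+x$, and these must be reconciled with the denominator as printed, so careful bookkeeping of the numerator polynomial and of the powers of $1-x^2$ is essential. To guard against sign and indexing slips I would check the first few coefficients against a direct census of symmetric valleys --- for instance the single symmetric valley $212$ in $C_{5,3}$ contributing depth $1$, and the symmetric valley $313$ in $C_{7,3}$ contributing depth $2$ --- before committing to the closed form.
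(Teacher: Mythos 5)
Your strategy is the same one the paper relies on: the paper offers no derivation beyond ``according to Theorem~\ref{Th2}'' (plus computer algebra), and your chain-rule reduction is the honest, by-hand version of that, parallel to Lemma~\ref{lem1}. Your intermediate formulas are all correct: $G_a(1,1)=0$, hence $T|_{p=d=1}=\frac{xy}{1-x}$ and $(1-T)^{-2}=\frac{(1-x)^2}{(1-x-xy)^2}$, and the quotient rule does give $\partial_d G_a|_{p=d=1}=\frac{-(a-1)(x-1)+x^a-x}{(x-1)^2}$.

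The gap is precisely the step you defer. Carrying the summation out,
\[
\sum_{a\geq 1}x^{2a+1}\bigl[-(a-1)(x-1)+x^a-x\bigr]
=\frac{x^5}{(1-x)(1+x)^2}-\frac{x^6}{(1+x)(1-x^3)}
=\frac{x^5}{(1+x)^2(1-x^3)},
\]
so the computation lands on
\[
\sum_{n,k\geq 0}\dsymvalley(n,k)x^ny^k=\frac{x^5y^3}{(1+x)^2(1-x^3)(1-x-xy)^2}
=\frac{(x^5-2x^6+x^7)\,y^3}{(1-x^3)(1-x^2)^2(1-x-xy)^2}.
\]
At $y=1$ this is exactly \eqref{eq8} (and it reproduces, e.g., $\dsymvalley(8)=17$ from the paper's own example), but it is \emph{not} the right-hand side of \eqref{eq9} as printed: that expression is missing a factor $1-x^2$ in the denominator and has a different numerator, and its coefficient of $x^5y^3$ works out to $-2$, whereas $\dsymvalley(5,3)=1$ (the composition $212$). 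So your closing assertion that the consolidation ``yields the stated rational function'' cannot be correct for \eqref{eq9}; the two printed displays are mutually inconsistent, and the sanity checks you yourself propose ($212\in C_{5,3}$, $313\in C_{7,3}$) would have exposed this at once had you run them. To complete the proof you must actually perform the final summation and state the corrected form of \eqref{eq9} given above rather than deferring to the printed formula.
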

Using a computer algebra system such as Maple, we determined the coefficients of $x^n$ in (\ref{eq8}) and obtained the following result.
\begin{corollary}\label{cor2}
The sum of depths of symmetric valleys over all compositions of $n$ is given by
\begin{align}
\dsymvalley(n)=\left(\frac{-6n+7}{108}\right)(-1)^{n}+\left(\frac{21n-79}{1323}\right)2^n \nonumber \\
+\frac{(-33-15\sqrt{3}i)\left(-2\right)^n}{441(1+i\sqrt{3})^{n+1}}+\frac{(-33+15\sqrt{3}i)\left(-2\right)^n}{441(1-i\sqrt{3})^{n+1}}+\frac{1}{12}.\label{eq91}
 \end{align}
\end{corollary}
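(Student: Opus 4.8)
The plan is to extract the coefficient $[x^n]$ from the rational generating function provided by Lemma~\ref{lem2}, equation~(\ref{eq8}), by partial fraction decomposition. I would begin by simplifying the expression. The numerator factors as $x^7+x^5-2x^6 = x^5(1-x)^2$, and after writing $1-x^3 = (1-x)(1+x+x^2)$ and $(1-x^2)^2 = (1-x)^2(1+x)^2$ the denominator becomes $(1-2x)^2(1-x)^3(1+x)^2(1+x+x^2)$. Cancelling the common factor $(1-x)^2$ yields
\begin{equation*}
\sum_{n\geq 0}\dsymvalley(n)x^n = \frac{x^5}{(1-2x)^2(1-x)(1+x)^2(1+x+x^2)}.
\end{equation*}
This exhibits the pole structure cleanly: a double pole at $x=1/2$ from $(1-2x)^2$, a double pole at $x=-1$ from $(1+x)^2$, a simple pole at $x=1$ from $(1-x)$, and a conjugate pair of simple poles at the primitive cube roots of unity $\omega=e^{2\pi i/3}$ and $\overline{\omega}$, the zeros of $1+x+x^2$.

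Next I would write the partial fraction decomposition
\begin{equation*}
\frac{x^5}{(1-2x)^2(1-x)(1+x)^2(1+x+x^2)} = \frac{A_1}{1-2x}+\frac{A_2}{(1-2x)^2}+\frac{B}{1-x}+\frac{C_1}{1+x}+\frac{C_2}{(1+x)^2}+\frac{Dx+E}{1+x+x^2}
\end{equation*}
and solve for the constants. The residues at the simple poles $x=1/2$, $x=-1$, $x=1$, $\omega$, $\overline{\omega}$ are obtained by the usual cover-up rule, while the coefficients attached to the double poles require differentiating the relevant cofactor once. This resolution of the two double poles, together with the complex pair, is the only genuinely laborious part, and is exactly where the computer algebra system mentioned by the authors removes the bookkeeping burden.

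Finally I would read off coefficients using the standard identities $[x^n]\frac{1}{1-\alpha x}=\alpha^n$ and $[x^n]\frac{1}{(1-\alpha x)^2}=(n+1)\alpha^n$. The double pole at $x=1/2$ produces a term of shape $(C n+D)2^n$, the double pole at $x=-1$ produces $(A n+B)(-1)^n$, the simple pole at $x=1$ produces the constant $\tfrac{1}{12}$, and the conjugate pair at $\omega,\overline{\omega}$ produces $\mu\,\omega^n+\overline{\mu}\,\overline{\omega}^n$. Using $-2/(1+i\sqrt3)=\omega$, so that $\omega^n=(-2)^n/(1+i\sqrt3)^n$ and $\overline{\omega}^n=(-2)^n/(1-i\sqrt3)^n$, and absorbing the residue into the numerator converts this last pair into the two complex fractions appearing in (\ref{eq91}). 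Collecting the five contributions gives the claimed closed form for $\dsymvalley(n)$.

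The main obstacle is purely computational: correctly resolving the two double poles and simplifying the complex-conjugate residue pair into the real-valued shape stated in the corollary. As an independent check I would evaluate the resulting formula at small $n$ against the direct series expansion of (\ref{eq8}), for instance comparing $\dsymvalley(5)$ with a hand count of symmetric-valley depths over the compositions of $5$.
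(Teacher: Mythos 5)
Your proposal is correct and follows essentially the same route as the paper: the authors also obtain Corollary~\ref{cor2} by extracting $[x^n]$ from the rational generating function~(\ref{eq8}) of Lemma~\ref{lem2}, delegating the partial-fraction bookkeeping to Maple, whereas you simply make that computation explicit (and your simplification to $\frac{x^5}{(1-2x)^2(1-x)(1+x)^2(1+x+x^2)}$ and the resulting pole analysis are consistent with the stated closed form, e.g.\ the residue $\frac{1}{12}$ at $x=1$). No gap.
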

For instance, when $n=8$, there are $15$ symmetric valleys over the compositions: $323$, $3131$, $1313$, $2123$, $3212$, $21221$, $21212$, $22121$, $12122$, $12212$,  $212111$, $111212$, $112121$, and $121211$. The sum of depths of symmetric valleys is equal to $17$ as obtained in equation \ref{eq91}.
\subsection{Combinatorial derivation of the number of compositions of $n$ with exactly $k$ parts according to sum of symmetric valley heights }

\begin{theorem}
\begin{enumerate}
\item For $n\geq 0$, and $k>3$ we have
$$\dsymvalley(n,k)=(k-2)\sum_{m=1}^{\lfloor\frac{n-k+1}{3}\rfloor}\sum_{b=m+1}^{\lfloor\frac{n-m-(k-3)}{2}\rfloor}\binom{n-2b-m-1}{k-4}(b-m). $$
\item When $k=3$ we have
$$\dsymvalley(n,3)=\sum_{m=1}^{ \lfloor \frac{n-2}{3}\rfloor}d_m$$
where $d_m$ is defined as follows:
\begin{equation}
  d_m=\begin{cases}
    \frac{n-3m}{2}, & \text{if $\frac{n-m}{2}$ is an  integer}.\\
    0, & \text{otherwise}.
  \end{cases}
\end{equation}
\end{enumerate}
\end{theorem}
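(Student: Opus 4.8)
The plan is to mirror the combinatorial argument just carried out for $\hsympeak(n,k)$: first count the symmetric valleys of $C_{n,k}$ organized by their shape and location, and then weight each occurrence by its depth. Recall that a symmetric valley is a factor $\pi_{i-1}\pi_i\pi_{i+1}=bmb$ with $b>m$ occurring at some position $2\le i\le k-1$, and its depth is $b-m$. Hence $\dsymvalley(n,k)$ is exactly the sum, over all compositions $\pi\in C_{n,k}$ and all valley positions within each $\pi$, of the quantity $b-m$. Because we are summing over occurrences (a composition with two valleys is counted twice, once per position/shape), no inclusion–exclusion is needed.

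First I would count, for a fixed shape $bmb$ with $b>m\ge 1$ and a fixed position $i$, the number of $\pi\in C_{n,k}$ with $\pi_{i-1}\pi_i\pi_{i+1}=bmb$. Deleting these three entries leaves $k-3$ parts that are positive integers summing to $n-2b-m$, and the number of such sequences is $\binom{n-2b-m-1}{k-4}$. Since each of the $k-2$ admissible positions contributes the same count (independently of $i$), summing gives the total valley count of $C_{n,k}$, exactly as read off from \cite{MMR}; multiplying the summand by the depth $b-m$ then produces $\dsymvalley(n,k)$.

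Next I would pin down the summation ranges, which is where the valley case diverges from the peak case. The binomial coefficient is nonzero precisely when $2b+m\le n-k+3$. Combined with $b\ge m+1$ (the key sign reversal $b>m$, as opposed to $b<m$ for peaks), this forces $3m+2\le n-k+3$, i.e. $m\le\lfloor (n-k+1)/3\rfloor$, and for each such $m$ one gets $m+1\le b\le\lfloor (n-m-(k-3))/2\rfloor$. These are exactly the limits in part~(1), so the substantive point here is transcribing the ranges correctly rather than any new idea.

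Finally, for part~(2) I would handle $k=3$ separately, since then $k-4=-1$ and $\binom{n-2b-m-1}{-1}$ degenerates. When $k=3$ the entire composition is the valley $bmb$ with $2b+m=n$, so $b=(n-m)/2$ must be a positive integer exceeding $m$; this requires $(n-m)/2\in\mathbb{Z}$, in which case the depth is $d_m=b-m=(n-3m)/2$, and otherwise no such valley exists and $d_m=0$. Summing $d_m$ over the admissible range $1\le m\le\lfloor (n-2)/3\rfloor$ (the bound coming from $b>m\Leftrightarrow n-3m\ge 2$) yields the stated formula. I expect the only real obstacles to be the careful bookkeeping of these ranges and the degenerate $k=3$ boundary; the core enumeration is a routine adaptation of the peak argument.
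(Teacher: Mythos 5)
Your proposal is correct and follows essentially the same route as the paper: count occurrences of a valley $bmb$ at a fixed position via $\binom{n-2b-m-1}{k-4}$ compositions of the remaining $n-2b-m$ into $k-3$ positive parts, multiply by the $k-2$ position choices, weight each occurrence by its depth $b-m$, and derive the same ranges for $m$ and $b$ (with the same separate treatment of the degenerate case $k=3$). No substantive difference from the paper's argument.
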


\begin{proof}
\begin{enumerate}
\item Let $\pi_{i-1}\pi_{i}\pi_{i+1}=bmb$ represent a symmetric valley within a composition $\pi$ of $n$ with $k\geq 4$ parts, where $ 2\leq i\leq k-1$  and $1\leq m \leq \lfloor\frac{n-k+1}{3}\rfloor$. Notice that the minimum value of $b$ is $m+1$, and $b$ reaches its maximum value when $\pi_j=1$ for all $1\leq j\leq k$ with $j\neq i-1, i,i+1$. Therefore, $m+1\leq b\leq \lfloor\frac{n-m-(k-3)}{2}\rfloor$.

 The number of compositions of the form $\pi$ is $\binom{n-2b-m-1}{k-4}$, which is equivalent to determining the number of solutions to  $\ell_1+\ell_2\dots+\ell_{k-3}= n-2b-m-(k-3)$, where $\ell_1,\ell_2,\ldots\ell_{k-3}$ are  nonnegative integers. The depth of the symmetric valley $bmb$ is $b-m$. Therefore, the total sum of depths of compositions of $n$ with a symmetric valley in the $i$-th position can be expressed as follows:
\begin{equation*}
\sum_{m=1}^{\lfloor\frac{n-k+1}{3}\rfloor}\sum_{b=m+1}^{\lfloor\frac{n-m-(k-3)}{2}\rfloor}\binom{n-2b-m-1}{k-4}(b-m).
\end{equation*}
 There are $k-2$ options for choosing $i$. Multiplying the above expression by $k-2$ yields the desired result.
\item Let  $d_m=b-m$. If  $\frac{n-m}{2}$ is  an integer then $b=\frac{n-m}{2}$. In this case, $d_m=b-m=\frac{n-m}{2}-m$. Otherwise, there is no symmetric valley $bmb$ such that $\frac{n-m}{2}$ is an integer, and therefore, $d_m=0$.
\end{enumerate}
\end{proof}

\section{ A discrete geometrically distributed sample}
\subsection{The expectation }
In this section we explore discrete geometrically distributed samples under the statistics of  $\sympeak, \symvalley, \hsympeak$, and $\dsymvalley$.
\begin{theorem}
\begin{enumerate}
\item The expected value of the statistic $\sympeak$ in a sample of $n$ geometric random variables is given by
$$\frac{p^2q}{1-q^3}(n-2).$$
\item The expected value of the statistic $\symvalley$ in a sample of $n$ geometric random variables is given by
$$p^2\left(\frac{1}{1-q^2}-\frac{1}{1-q^3}\right)(n-2).$$
\item The expected value of the statistic $\hsympeak$ in a sample of $n$ geometric random variables is given by
$$\frac{pq}{1-q^3}(n-2).$$
\item The expected value of the statistic $\dsymvalley$ in a sample of $n$ geometric random variables is given by
$$q\left(\frac{p}{1-q^3}-\frac{1}{(1+q)^2}\right)(n-2).$$
\end{enumerate}
\end{theorem}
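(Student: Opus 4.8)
The plan is to compute each expected value directly from the probabilistic definition, exploiting the fact that the four statistics are all additive over the window positions of the word. Since a sample is a word $\omega=\omega_1\cdots\omega_n$ of i.i.d.\ geometric letters, each statistic (for example $\sympeak$) can be written as a sum $\sum_{i=1}^{n-2}\mathbf{1}_i$ of indicator random variables, where $\mathbf{1}_i$ detects the relevant local pattern $\omega_i\omega_{i+1}\omega_{i+2}$ (for the height/depth statistics the indicator is replaced by the weighted contribution $\mathbf{1}_i\cdot(\omega_{i+1}-\omega_i)$ or $\mathbf{1}_i\cdot(\omega_i-\omega_{i+1})$). By linearity of expectation and the translation-invariance of i.i.d.\ sampling, the expected contribution of each window is identical, so each expected value factors as $(n-2)$ times the expected contribution of a single triple $(\omega_1,\omega_2,\omega_3)$ of independent geometric variables. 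Thus the whole problem reduces to four finite triple-sum computations.

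First I would treat $\sympeak$. A symmetric peak at position $1$ means $\omega_1<\omega_2$ and $\omega_1=\omega_3$; writing $\omega_1=\omega_3=a$ and $\omega_2=b$ with $b>a$, its probability is $\sum_{a\ge1}\sum_{b>a} p q^{a-1}\cdot pq^{b-1}\cdot pq^{a-1}$. The inner geometric sum over $b>a$ gives $q^{a}\cdot\frac{1}{1-q}\cdot\,$(up to the $p$ factors), and after collapsing the remaining geometric series in $a$ one lands on $\frac{p^2q}{1-q^3}$. Multiplying by $(n-2)$ yields part~(1). For $\hsympeak$ the only change is the extra weight $b-a$ inside the sum; I would compute $\sum_{a\ge1}\sum_{b>a}(b-a)\,p^2q^{2a-2}\,pq^{b-1}$ by substituting $c=b-a\ge1$ and using $\sum_{c\ge1} c\,q^{c}=\frac{q}{(1-q)^2}$, which after the $a$-summation produces the claimed $\frac{pq}{1-q^3}$; this is part~(3).

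For the valley statistics the pattern is $\omega_1>\omega_2$, $\omega_1=\omega_3=b$, $\omega_2=m$ with $m<b$. For $\symvalley$ (part~(2)) the single-window probability is $\sum_{b\ge2}\sum_{m=1}^{b-1}p^2q^{2b-2}\cdot pq^{m-1}$; I would evaluate the inner sum over $m$ as $\frac{1-q^{b-1}}{1-q}\cdot p$ and then split the outer $b$-sum into two geometric series, which is exactly what produces the difference $\frac{1}{1-q^2}-\frac{1}{1-q^3}$ after multiplying by $p^2$. For $\dsymvalley$ (part~(4)) I insert the depth weight $b-m$ and evaluate $\sum_{b\ge2}\sum_{m=1}^{b-1}(b-m)\,p^2q^{2b-2}\,pq^{m-1}$; here the inner weighted sum $\sum_{m=1}^{b-1}(b-m)q^{m-1}$ is a truncated arithmetic–geometric sum, and after simplification and the outer $b$-summation the two terms $\frac{p}{1-q^3}$ and $\frac{1}{(1+q)^2}$ emerge, scaled by $q$.

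The main obstacle is purely computational bookkeeping rather than conceptual: the valley computations involve \emph{truncated} inner sums (the index $m$ runs only up to $b-1$), so the resulting closed forms are differences of two geometric series, and one must carefully track the $p=1-q$ factors so that powers of $p$ cancel correctly to match the stated expressions. In particular, verifying that the depth computation in part~(4) collapses to precisely $q\bigl(\frac{p}{1-q^3}-\frac{1}{(1+q)^2}\bigr)$ requires combining an arithmetic–geometric summation with a geometric one and simplifying using $p+q=1$ and $1-q^2=(1+q)(1-q)$. I would double-check these with the explicit factorizations $1-q^3=(1-q)(1+q+q^2)$ and $1-q^2=(1-q)(1+q)$ to confirm the final forms.
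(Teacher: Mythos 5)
Your proposal is correct, and I verified that all four single-window sums do collapse to the stated closed forms (in particular, the arithmetic--geometric sum in part (4) reduces to $\frac{q^2}{(1+q)^2(1+q+q^2)}$, which equals $q\bigl(\frac{p}{1-q^3}-\frac{1}{(1+q)^2}\bigr)$). However, your route is genuinely different from the paper's. You decompose each statistic as a sum of $(n-2)$ identically distributed window contributions and invoke linearity of expectation, reducing everything to a double sum over a single triple of independent geometric letters. The paper instead substitutes $x\mapsto q$, $y\mapsto pz/q$ into the generating functions of Theorems \ref{Th1} and \ref{Th2} to obtain generating functions for geometric samples, then differentiates with respect to the pattern-marking variable at $1$ and extracts the coefficient of $z^n$. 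Your argument is more elementary and self-contained: it does not depend on the earlier generating-function machinery, and it makes the factor $(n-2)$ combinatorially transparent rather than something read off from a coefficient of a rational series. What the paper's approach buys is uniformity with the rest of the article and, more importantly, an easy path to the variance computations in the following subsection via second derivatives; with your indicator decomposition, the variance would require handling covariances between overlapping windows, which is noticeably more delicate than the expectation.
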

\begin{proof}
\begin{enumerate}
\item By mapping $x\mapsto q$, $y\mapsto \frac{pz}{q}$, and $q \mapsto u$ in  equation (\ref{eq3}) we obtain the generating function for the number of discrete geometrically distributed samples of length $n$ according to the statistic $\sympeak$, which is given by

\begin{equation}\label{eq29}
\GSP(z,u)=\frac{1}{1-\sum_{a\geq1}\frac{q^{a-1}pz}{1-pz^2q^{2a-1}(u-1)}}.
\end{equation}
To find  the expected value of the statistic sp in a sample of $n$ geometric random variables, we use the expression $\frac{[z^n]\frac{d}{du}\GSP(z,u)|_{u=1}}{[z^n]\GSP(u,1)}$.
It is obvious that $\GSP(u,1)=\frac{1}{1-z}$, hence $[z^n]\GSP(u,1)=1$. From  (\ref{eq29}) we get
\begin{align*}
\frac{d}{du}\GSP(z,u)|_{u=1}=\frac{p^2z^3q}{(1-q^3)}\sum_{k\geq0}\binom{k+1}{k}z^k.
\end{align*}
Therefore,
\begin{align*}
[z^n]\frac{d}{du}\GSP(z,u)|_{u=1}=[z^{n-3}]\frac{p^2z^3q}{(1-q^3)}\sum_{k\geq0}\binom{k+1}{k}z^k=\frac{p^2q}{1-q^3}(n-2).
\end{align*}

\item Under the mapping $x\mapsto q$, $y\mapsto \frac{pz}{q}$, and $p \mapsto u$, and by substituting $d=1$ in equation (\ref{eq15}), we define $\GSV(z,u)$ as the generating function for the number of discrete geometrically distributed samples of length $n$ according to the statistic $\symvalley$. By finding the coefficients of $z^n$ in $\frac{\partial }{\partial u}\GSV(z,u)\mid_{u=1}$, similarly to above we obtain the required result.

\item Under the  mapping $x\mapsto q$,  $y\mapsto \frac{pz}{q}$, and  $h \mapsto u$, and by substituting $q=1$ in equation (\ref{eq11}), we define $\GHSP(z,u)$ as the generating function for the number of discrete geometrically distributed samples of length $n$ according to the statistic $\hsympeak$.  By finding the coefficients of $z^n$ in $\frac{\partial }{\partial u}\GHSP(z,u)\mid_{u=1}$, similarly to above we obtain the required result.
\item By mapping $x\mapsto q$, $y\mapsto \frac{pz}{q}$, and $d \mapsto u$, and by substituting $p=1$ in equation (\ref{eq15}), we define $\GDSV(z,u)$ as the generating function for the number of discrete geometrically distributed samples of length $n$ according to the statistic $\dsymvalley$. By finding the coefficients of $z^n$ in $\frac{\partial }{\partial u}\GDSV(z,u)\mid_{u=1}$, similarly to above we obtain the required result.
\end{enumerate}
\end{proof}
\subsection{The variance }
\begin{theorem}
\begin{enumerate}
\item The variance of the number of symmetric peaks of a sample of $n$ geometric random variables is given by
$$2(n-4)p^3q^2\left(\frac{p(n-5)}{24(1-q)^3}+\frac{1}{1-q^5}\right)+(n-2)\frac{p^2q}{1-q^3}-(n-2)^2\frac{p^4q^2}{(1-q^3)^2}.$$
\item The variance of the sum of symmetric peak heights of a sample of $n$ geometric random variables is given by
$$(n-5)(n-6)\frac{p^2q^2}{(1-q^3)^2}+2(n-4)\frac{pq^2}{(1-q^5)}+\frac{(n-2)}{(1-q^3)}(2q^2+pq)-\frac{p^2q^2}{(1-q^3)^2}(n-2)^2.$$
\item The variance of the number of symmetric valleys of a sample of $n$ geometric random variables is given by
$$\frac{p^4q^2}{1-q^3}(n-5)(n-6)+\frac{2p^3}{1-q^5}(n-5)+p^2\left(\frac{1}{1-q^2}-\frac{1}{1-q^3}\right)(n-2)-p^4\left(\frac{1}{1-q^2}-\frac{1}{1-q^3}\right)^2(n-2)^2.$$
\end{enumerate}
\end{theorem}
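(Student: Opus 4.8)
The plan is to obtain each variance from the first two factorial moments of the relevant statistic, reusing the geometric generating functions already built for the expectations. Write $S$ for the statistic ($\sympeak$, $\hsympeak$, or $\symvalley$) and let $G(z,u)$ be the corresponding geometric generating function ($\GSP$, $\GHSP$, or $\GSV$), obtained from the appropriate specialization of (\ref{eq3}), (\ref{eq11}), or (\ref{eq15}) by the substitutions $x\mapsto q$, $y\mapsto pz/q$, and marking-variable $\mapsto u$ used above. By construction $[z^n]G(z,u)=E\!\left[u^{S}\right]$ over geometric words of length $n$, and since the inner sum at $u=1$ collapses to $z$ we have $G(z,1)=1/(1-z)$ and $[z^n]G(z,1)=1$. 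Hence $[z^n]\,\partial_u G(z,u)|_{u=1}=E[S]$ and $[z^n]\,\partial_u^2 G(z,u)|_{u=1}=E[S(S-1)]$, so that
\[
\operatorname{Var}(S)=[z^n]\,\partial_u^2 G(z,u)\big|_{u=1}+E[S]-\bigl(E[S]\bigr)^2 .
\]
The quantities $E[S]$ and $(E[S])^2$ are furnished verbatim by the expectation theorem, so the only new object to compute is the second factorial moment.

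Writing $G=1/(1-F)$ with $F=\sum_{a\ge1}f_a$, differentiation gives $\partial_u G=G^2\,\partial_u F$ and $\partial_u^2 G=2G^3(\partial_u F)^2+G^2\,\partial_u^2 F$. Evaluating at $u=1$ replaces $G$ by $1/(1-z)$ and leaves two explicit rational functions: the ``product'' part $2(1-z)^{-3}(\partial_u F|_{u=1})^2$ and the ``diagonal'' part $(1-z)^{-2}\,\partial_u^2 F|_{u=1}$. For each statistic I would differentiate the summand $f_a$ twice, set $u=1$, and resum the resulting geometric-type series in $a$ in closed form; for $\sympeak$ this yields $\partial_u F|_{u=1}=p^2qz^3/(1-q^3)$ and $\partial_u^2 F|_{u=1}=2p^3q^2z^5/(1-q^5)$, while for $\symvalley$ the squared factor $(1-q^{a-1})^2$ produces a combination of $1/(1-q^3)$, $1/(1-q^4)$ and $1/(1-q^5)$. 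Coefficient extraction via $[z^m](1-z)^{-2}=m+1$ and $[z^m](1-z)^{-3}=\binom{m+2}{2}$ then produces the polynomial-in-$n$ prefactors, and assembling the three pieces yields the claimed variance formulas.

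It is worth recording the combinatorial picture, both to organize the computation and as a check on the coefficients. Each statistic is a sum $S=\sum_{i=2}^{n-1}W_i$ of local weights attached to the $n-2$ interior positions, so $E[S(S-1)]=\sum_{i\ne j}E[W_iW_j]$. Two symmetric peaks (or two symmetric valleys) can never occupy adjacent positions, since a peak at $i$ forces $\omega_i>\omega_{i+1}$ while a peak at $i+1$ forces $\omega_{i+1}>\omega_i$; triples whose positions differ by at least $3$ are disjoint and hence independent, contributing $(n-4)(n-5)$ ordered pairs each weighted by $(E[W])^2$; and only positions differing by exactly $2$ share a letter and produce a genuine correlation, giving the $2(n-4)$ overlap terms responsible for the $1/(1-q^5)$ (and, for valleys, $1/(1-q^4)$) denominators. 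Subtracting $(E[S])^2$ then cancels the bulk of the independent contribution, leaving the lower-order terms in the statement.

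The main obstacle is entirely computational. The delicate step is the closed-form resummation over $a$ of $\partial_u^2 F|_{u=1}$ and of $(\partial_u F|_{u=1})^2$, together with the careful treatment of the distance-$2$ overlapping configurations, which is where the higher roots of unity ($1-q^4$, $1-q^5$) enter and where an off-by-one in the powers of $z$ would corrupt the $n$-dependent coefficients. Once these series are summed, the partial-fraction and coefficient-extraction steps are routine and can be verified with a computer algebra system, exactly as in Corollaries~\ref{cor1} and \ref{cor2}; the three cases $\GSP$, $\GHSP$, $\GSV$ are handled in parallel, the only genuine differences being the extra weighting in $\hsympeak$ and the $(1-q^{a-1})$ factor in $\symvalley$.
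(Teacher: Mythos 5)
Your proposal is correct and follows essentially the same route as the paper: the variance is computed as $E[S(S-1)]+E[S]-\bigl(E[S]\bigr)^2$, with the factorial moments obtained by differentiating the geometric generating function with respect to $u$ and extracting the coefficient of $z^n$, and you in fact supply more detail (the decomposition $\partial_u^2G=2G^3(\partial_uF)^2+G^2\,\partial_u^2F$ and the closed-form resummations over $a$) than the paper's proof does. One remark worth checking: your correctly computed independent-pair contribution is $p^4q^2(n-4)(n-5)/(1-q^3)^2$, whereas the paper's printed second factorial moment carries the factor $2(n-4)(n-5)p^3q^2\cdot\frac{p}{4!(1-q)^3}$ in that place, and these do not literally coincide, so your assembly may not reproduce the stated formula verbatim.
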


\begin{proof}
\begin{enumerate}
\item To find  the variance of the statistic sp in a sample of $n$ geometric random variables, we use the expression
 $$\frac{[z^n]\frac{d^2}{du^2}\GSP(z,u)|_{u=1}}{[z^n]\GSP(u,1)}+\frac{[z^n]\frac{d}{du}\GSP(z,u)|_{u=1}}{[z^n]\GSP(u,1)}-\left(\frac{[z^n]\frac{d}{du}\GSP(z,u)|_{u=1}}{[z^n]\GSP(u,1)}\right)^2.$$
According to previous results, we have
 $$\frac{[z^n]\frac{d}{du}\GSP(z,u)|_{u=1}}{[z^n]\GSP(u,1)}=\frac{p^2q}{1-q^3}(n-2).$$
In addition,
$$\frac{[z^n]\frac{d^2}{du^2}\GSP(z,u)|_{u=1}}{[z^n]\GSP(u,1)}=2(n-4)p^3q^2\left(\frac{p(n-5)}{4!(1-q)^3}+\frac{1}{1-q^5}\right).$$
\end{enumerate}
Using a similar approach, we achieve $2$ and $3$.
\end{proof}
We invite readers to derive the formula for the variance of the sum of symmetric valley depths for a sample of $n$ geometric random variables.

\end{document}